\crefname{lemma}{Lemma}{Lemmas}
\crefname{theorem}{Theorem}{Theorems}
\def\@settitle{\begin{center}%
  \baselineskip14\p@\relax
  \bfseries
  \uppercasenonmath\@title
  \@title
  \ifx\@subtitle\@empty\else
     \\[1ex]\uppercasenonmath\@subtitle
     \footnotesize\mdseries\@subtitle
  \fi
  \end{center}%
}
\def\subtitle#1{\gdef\@subtitle{#1}}
\def\@subtitle{}
\newtheorem{theorem}{Theorem}[section]
\newtheorem{corollary}[theorem]{Corollary}
\newtheorem{proposition}[theorem]{Proposition}
\newtheorem{lemma}[theorem]{Lemma}
\theoremstyle{remark}
\newtheorem{remark}[theorem]{Remark}
\newcommand{\vol}{\rm{vol}}
\DeclareMathOperator{\csch}{csch}
\begin{document}

\title{On reflections of congruence hyperbolic manifolds}

\author{Sami Douba}
\address{Mathematisches Institut der Universit\"at Bonn, Endenicher Allee 60, 53115 Bonn, Germany}
\email{douba@math.uni-bonn.de}

\author{Franco Vargas Pallete}
\address{IMPA - Instituto Nacional de Matem\'atica Pura e Aplicada, Rio de Janeiro, RJ, Brasil, 22460-320}
\email{vargas.pallete@impa.br}

\begin{abstract}
We show that the standard method for constructing closed hyperbolic manifolds of arbitrary dimension possessing reflective symmetries typically produces reflections whose fixed point sets are nonseparating.
\end{abstract}

\maketitle

Given a complete oriented Riemannian manifold $M$, we will call an isometric involution $\tau: M \rightarrow M$ a {\em reflection} if the fixed point set $\mathrm{Fix}(\tau)$ of $\tau$ is a (possibly disconnected, but nonempty) $2$-sided  hypersurface of $M$. Note that $\mathrm{Fix}(\tau)$ is then necessarily  totally geodesic. For a closed nonpositively curved manifold~$M$, denote by $\mathrm{sys}(M)$ the  systole of $M$, that is, the length of a shortest closed geodesic in $M$, and by $$h(M) : = \inf_\Omega \frac{\mathrm{area}(\partial \Omega)}{\vol(\Omega)}$$ the Cheeger constant of $M$, where the infimum is taken over all domains $\Omega \subset M$ with rectifiable boundary and of volume at most $\frac{1}{2}\mathrm{vol}(M)$. One says a sequence $(M_i)_{i \in \mathbb{N}}$ of such manifolds $M_i$ with $\mathrm{vol}(M_i) \rightarrow \infty$ is {\em expander} if there is some $\epsilon > 0$ such that $h(M_i)> \epsilon$ for all $i \in \mathbb{N}$. The purpose of this note is to observe the following.

\begin{theorem}\label{thm:main}
Suppose $(M_i)_{i \in \mathbb{N}}$ is an expander sequence of closed hyperbolic manifolds $M_i$ each admitting a reflection $\tau_i : M_i \rightarrow M_i$, and with $\mathrm{sys}(M_i) \rightarrow \infty$. Then the (possibly disconnected) hypersurface $\mathrm{Fix}(\tau_i)$ of $M_i$ does not separate $M_i$ for all sufficiently large $i \in \mathbb{N}$.
\end{theorem}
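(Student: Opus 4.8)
The plan is to argue by contradiction: I assume $F_i := \mathrm{Fix}(\tau_i)$ separates $M_i$ for infinitely many $i$, and then play off two estimates for $\area(F_i)$ against each other. The expander hypothesis will force $\area(F_i)$ to be a definite fraction of $\vol(M_i)$, while the systole hypothesis will force a wide collar about $F_i$ to embed and hence to occupy more volume than $M_i$ possesses.

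First I would record the splitting that the reflection supplies. As $F_i$ is the fixed set of the isometric involution $\tau_i$ and is $2$-sided, at each $p \in F_i$ the differential $d(\tau_i)_p$ fixes $T_pF_i$ and acts by $-1$ on the normal line, so $\tau_i$ reverses a chosen coorientation of $F_i$ globally. On the region-adjacency graph $G$ of $M_i\setminus F_i$ (vertices the components of the complement, one edge for each component of $F_i$, joining the regions on its two sides), this means $\tau_i$ carries the two endpoints of every edge to one another, so each vertex $A$ is adjacent only to $\tau_i(A)$. Since $F_i$ separates, the two sides of each of its components lie in \emph{distinct} regions, so $A \neq \tau_i(A)$; as $G$ is connected it must then consist of exactly the two vertices $A$ and $\tau_i(A)$. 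Hence $M_i\setminus F_i$ has exactly two components, interchanged by $\tau_i$, so $\overline A$ is a domain with $\partial\overline A = F_i$ and $\vol(\overline A) = \tfrac12\vol(M_i)$. Feeding $\overline A$ into the Cheeger quotient and invoking the expander hypothesis gives
$$\epsilon < h(M_i) \le \frac{\area(F_i)}{\tfrac12\vol(M_i)}, \qquad\text{so}\qquad \area(F_i) > \tfrac{\epsilon}{2}\vol(M_i).$$

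Next I would produce the competing upper bound. Because $F_i$ is totally geodesic, in Fermi coordinates the set of points within normal distance $R$ of $F_i$ on the $\overline A$-side has volume $\area(F_i)\int_0^R \cosh^{n-1}(t)\,dt$ whenever this collar embeds, and that number is at most $\vol(\overline A)=\tfrac12\vol(M_i)$. The crucial geometric claim is that the collar of width $R = \tfrac14\mathrm{sys}(M_i)$ does embed. Granting it and combining with the previous display yields $\epsilon\int_0^R \cosh^{n-1}(t)\,dt < 1$; but $\int_0^R\cosh^{n-1}(t)\,dt \ge R = \tfrac14\mathrm{sys}(M_i)\to\infty$, so the inequality fails for all large $i$, which is the desired contradiction.

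The heart of the matter, and the step I expect to be the main obstacle, is this collar embedding, where both the reflection and the systole enter; I would prove its contrapositive, namely that if the two-sided collar of width $R$ is not embedded then $\mathrm{sys}(M_i) < 4R$. Lifting to $\mathbb{H}^n = \widetilde{M_i}$ with $\Gamma = \pi_1(M_i)$, the preimage $\widetilde{F_i}$ is a disjoint union of totally geodesic hyperplanes, disjoint because $F_i$ is embedded, and a failure of embedding yields a point at normal distance $< R$ from two distinct such hyperplanes $P_0 \ne P'$, whence $d(P_0,P') < 2R$. Choosing the lift $\tilde\tau$ of $\tau_i$ that is the reflection across $P_0$, the subgroup $\Gamma^+ = \langle\Gamma,\tilde\tau\rangle$ contains $\Gamma$ with index two, and the reflection $\tilde\tau'$ across $P'$ lies in the nontrivial coset $\Gamma\tilde\tau$ since $P'$ too is a component of $\widetilde{F_i}$. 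Thus $\tilde\tau'\tilde\tau \in \Gamma$, and being a product of reflections in two disjoint hyperplanes it is a hyperbolic translation along their common perpendicular of length $2\,d(P_0,P') < 4R$; as a nontrivial element of the torsion-free lattice $\Gamma$ it determines a closed geodesic of length $< 4R$, giving $\mathrm{sys}(M_i) < 4R$. The points demanding care are the identification of the components of $\widetilde{F_i}$ with the fixed hyperplanes of the reflections in $\Gamma\tilde\tau$, and the standard fact that a product of reflections across ultraparallel hyperplanes is a translation by twice their distance; note that neither the constant $\tfrac14$ nor the parity of $n$ matters, only that the embedded collar width grows with the systole.
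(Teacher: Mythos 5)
Your proposal is correct and follows essentially the same route as the paper: the reflection doubles orthogeodesic segments to $\mathrm{Fix}(\tau_i)$ into closed geodesics, forcing an embedded collar of width $\tfrac14\mathrm{sys}(M_i)$, whose volume (linear in $\mathrm{area}(\mathrm{Fix}(\tau_i))$ with a factor growing in the width) then contradicts the expander hypothesis when the fixed set separates. The paper carries out the collar-embedding step downstairs via the closed geodesic $\omega_i\cup\tau_i(\omega_i)$ rather than via products of reflections in $\mathbb{H}^n$, and its Lemma~\ref{lem:collar} needs only that some complementary component has volume at most $\tfrac12\vol(M_i)$ (rather than your sharper observation that there are exactly two components of equal volume), but the underlying estimates are identical.
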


While it may seem difficult to arrange for all the conditions of Theorem \ref{thm:main} to hold simultaneously, the standard arithmetic method for constructing closed hyperbolic manifolds admitting reflections indeed results in sequences of manifolds as in Theorem \ref{thm:main}; see Remark \ref{rmk:arithmetic}. We felt compelled to share Theorem~\ref{thm:main}, despite its rather straightforward proof, as there appear to be misconceptions in the literature (for instance, in certain subsequent accounts\footnote{See, for instance, \cite[\S3]{MR2350468}, where it appears to be assumed that each reflection in the dihedral group of symmetries of the input arithmetic hyperbolic manifold separates. This subtlety is resolved in a footnote in \cite{MR4906330}. That this subtlety indeed had to be addressed is indicated by our~Theorem~\ref{thm:main}.} of the Gromov--Thurston construction~\cite{MR892185} of exotic negatively curved manifolds) regarding the content of Remark~\ref{rmk:arithmetic}. This confusion is perhaps due to the misleading cartoons often used to depict reflections. 

We also remark that it follows from superstrong approximation \cite{MR3000503, MR3445486} that, given any closed hyperbolic orbifold $M = \Gamma \backslash \mathbb{H}^n$, arithmetic or otherwise, there is an expander sequence of regular oriented finite manifold covers $M_i \rightarrow M$ with $\mathrm{sys}(M_i) \rightarrow \infty$. Thus, for instance, if~$\Gamma$ contains a reflection of $\mathbb{H}^n$, then the $M_i$, and the reflections $\tau_i$ induced on the $M_i$, are as in Theorem~\ref{thm:main}.

The proof of Theorem~\ref{thm:main} uses the following (known) elementary estimate.

\begin{lemma}\label{lem:collar}
    Let $M$ be a closed hyperbolic manifold and $\Sigma\subset M$ be a (possibly disconnected) separating closed embedded totally geodesic hypersurface. If $\Sigma$ has an open embedded collar of width $w>0$ in $M$, then $h(M)\leq\csch(w)$.
\end{lemma}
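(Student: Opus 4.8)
The plan is to exhibit an explicit test domain for the Cheeger quotient, namely one of the two ``sides'' into which $\Sigma$ cuts $M$, and to bound its volume from below by the volume of the half-collar it contains. The single geometric input is the shape of the hyperbolic metric in Fermi (normal) coordinates about a totally geodesic hypersurface.

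First I would set up Fermi coordinates on the collar. Writing $t$ for the signed distance to $\Sigma$, the metric on the embedded collar $\Sigma\times(-w,w)$ takes the warped-product form $dt^2+\cosh^2(t)\,g_\Sigma$: along a geodesic normal to a totally geodesic hypersurface in curvature $-1$, a Jacobi field tangent to $\Sigma$ solves $J''-J=0$ with $J(0)=v$ and $J'(0)=-\mathrm{II}(v)=0$, hence $J(t)=\cosh(t)\,v$ (parallel transported). Consequently the equidistant hypersurface $\Sigma_t$ has $(n-1)$-volume $\cosh^{n-1}(t)\,\area(\Sigma)$, and the half-collar on either side of $\Sigma$ has volume
$$\int_0^w \cosh^{n-1}(t)\,dt\cdot\area(\Sigma)\ \ge\ \sinh(w)\,\area(\Sigma),$$
where the inequality uses $\cosh^{n-1}(t)\ge\cosh(t)$ for $t\ge 0$ and $n\ge 2$. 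The same estimate applies to each component of $\Sigma$ separately.

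Next I would choose the test domain. Since $\Sigma$ separates $M$, the complement $M\setminus\Sigma$ has $k\ge 2$ components, whose volumes sum to $\vol(M)$; let $\Omega$ be one of least volume, so $\vol(\Omega)\le \vol(M)/k\le\tfrac12\vol(M)$ and $\Omega$ is an admissible competitor in the definition of $h(M)$, with $\partial\Omega$ a nonempty union of components of $\Sigma$. Because the collar $\Sigma\times(-w,w)$ is embedded, for each boundary component of $\Omega$ the width-$w$ half-collar pointing into $\Omega$ is disjoint from $\Sigma$ and limits onto that component from the $\Omega$-side, hence lies entirely in $\Omega$; distinct such half-collars are pairwise disjoint. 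Summing the estimate above over the components of $\partial\Omega$ therefore yields $\vol(\Omega)\ge\sinh(w)\,\area(\partial\Omega)$.

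Combining the two bounds gives $h(M)\le \area(\partial\Omega)/\vol(\Omega)\le \csch(w)$, as claimed. The only point requiring care---and the main (mild) obstacle---is the bookkeeping when $\Sigma$ is disconnected: one must check that each inward half-collar genuinely lies in $\Omega$ and that distinct inward half-collars do not overlap, both of which follow directly from the embeddedness of the collar. The area-scaling identity in Fermi coordinates is otherwise the whole content of the argument.
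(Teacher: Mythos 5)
Your proof is correct and follows essentially the same route as the paper's: take a complementary component of $\Sigma$ of volume at most $\tfrac12\vol(M)$ as the test domain and bound its volume from below by the volume of the inward half-collars, computed in Fermi coordinates. In fact you are slightly more careful than the printed proof, which writes the normal Jacobian as $\cosh(t)$ rather than $\cosh^{n-1}(t)$ (harmless, since only the lower bound $\cosh^{n-1}(t)\ge\cosh(t)$ is needed), and you make explicit the bookkeeping for disconnected $\Sigma$ that the paper leaves implicit.
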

\begin{proof}
    The open collar $C_w:= \Sigma\times(-w,w)\subset M$ of width $w$ around $\Sigma$ has metric
    \[
    \cosh^2(t)g_\Sigma(x) + dt^2,\quad x\in \Sigma, -w<t<w,
    \]
    where $g_\Sigma$ denotes the metric on $\Sigma$ and $t$ denotes the distance to $\Sigma$.
    By the coarea formula, each component of $C_w\setminus \Sigma$ has volume
    
    \begin{equation}\label{eq:collarvolume}
    \int_0^w \cosh(t)\mathrm{area}(\Sigma) dt = \sinh(w)\mathrm{area}(\Sigma).
    \end{equation}
    Hence, for each component $\Omega$ of $M\setminus\Sigma$ we have that $\frac{\mathrm{area}(\partial\Omega)}{\vol(\Omega)}\leq \csch(w)$, and the lemma follows.
\end{proof}

\begin{proof}[Proof~of~Theorem~\ref{thm:main}]
 A shortest orthogeodesic segment $\omega_i$ to $\Sigma_i:=\mathrm{Fix}(\tau_i)$ in $M_i$ has length at least $\frac{\mathrm{sys}(M_i)}{2}$, as $\omega_i \cup \tau_i(\omega_i)$ is a closed geodesic in $M_i$. It follows that $\Sigma_i$ has an open embedded collar of width $\frac{\mathrm{sys}(M_i)}{4}$ in $M_i$. Now suppose (up to extraction) that $\Sigma_i$ separates $M_i$ for all $i \in \mathbb{N}$. We then obtain from Lemma \ref{lem:collar} that $h(M_i) \rightarrow 0$, a contradiction.
\end{proof}

\begin{remark}\label{rmk:quantifiers}
    Note that the above proof in fact shows that if $h(M_i)>\epsilon$, then the conclusion of Theorem~\ref{thm:main} holds as soon as $\mathrm{sys}(M_i)\geq 4\csch^{-1}(\epsilon)$.
\end{remark}

\begin{remark}\label{rmk:negativecurvature}
    Observe that, for its application in the proof of Theorem~\ref{thm:main}, we need only that Equation \eqref{eq:collarvolume} in the proof of Lemma \ref{lem:collar}  provide a lower bound for the volume of a collar in terms of the area of $\Sigma$ and a multiplicative factor that depends only on the width $w$. The latter holds more generally in the nonpositively curved setting by the Rauch--Berger comparison theorems, so that if $M$ is a closed nonpositively curved Riemannian manifold containing a separating closed embedded totally geodesic hypersurface $\Sigma$ with an open embedded collar of width $w>0$, then ${h(M)\leq w}$. Similarly, Theorem \ref{thm:main} persists if ``hyperbolic'' is replaced with ``nonpositively curved.'' In analogy with Remark \ref{rmk:quantifiers}, we have in the latter case that if $h(M_i)>\epsilon$, then the conclusion of Theorem \ref{thm:main} holds as soon as $\mathrm{sys}(M_i)\geq 4\epsilon^{-1}$.
\end{remark}

\begin{remark}\label{rmk:arithmetic}
Let $n \geq 2$, let $k \subset \mathbb{R}$ be a totally real number field with ring of integers~$\mathcal{O}_k$, and let~$f$ be a quadratic form of signature $(n,1)$ on $\mathbb{R}^{n+1}$ with coefficients in~$k$ such that~$f^\sigma$ is positive definite for each nonidentity embedding $\sigma: k \rightarrow \mathbb{R}$. Suppose moreover that $f$ is anisotropic over $k$ (note that this is automatic if $k \neq \mathbb{Q}$). We identify the level set $\{x\in \mathbb{R}^{n+1}\vert f(x)=-1, x_{n+1}>0\}$ with $n$-dimensional hyperbolic space~$\mathbb{H}^n$ and $\mathrm{O}'_f(\mathbb{R})$ with $\mathrm{Isom}(\mathbb{H}^n)$, where $\mathrm{O}'_f(\mathbb{R})$ denotes the index-$2$ subgroup of $\mathrm{O}_f(\mathbb{R})$ preserving $\mathbb{H}^n$. By the Borel--Harish-Chandra theorem \cite{MR147566}, we have that $\Gamma:= \mathrm{O}'_f(\mathcal{O}_k)$ is a uniform lattice in $\mathrm{Isom}(\mathbb{H}^n)$. We suppose henceforth that $\Gamma$ contains a reflection $\tau$ of $\mathbb{H}^n$ (this is true, for instance, if the form $f$ is diagonal).

Now let $I_1 \supset I_2 \supset \ldots$ be a sequence of nonzero ideals in $\mathcal{O}_k$ with trivial intersection, and for $i \geq 1$, denote by $\Gamma(I_i)$ the principal congruence subgroup of~$\Gamma$ of level~$I_i$. There is some $N > 0$ such that for all $i \geq N$, the quotient $M_i:= \Gamma(I_i) \backslash \mathbb{H}^n$ is an oriented manifold, and the fixed point set $\Sigma_i := \mathrm{Fix}(\tau_i)$ of the reflection $\tau_i$ of $M_i$ induced by $\tau$ is a (possibly disconnected) $2$-sided closed embedded totally geodesic hypersurface of $M_i$. Since $\bigcap_{i} I_i = \{0\}$, we have $\mathrm{sys}(M_i) \rightarrow \infty$. Moreover, it is well known that there is some $\epsilon >0$ such that $h(M_i) > \epsilon$ for $i \geq N$; see Burger and Sarnak~\cite{MR1123369} and the references therein. It then follows from Remark~\ref{rmk:quantifiers} that if~$N_0 \geq N$ is chosen such that $\mathrm{sys}(M_{N_0}) \geq 4\csch^{-1}(\epsilon)$, then $\Sigma_i$ does not separate~$M_i$ for all~$i \geq N_0$. 
\end{remark}

In the notation of Remark~\ref{rmk:arithmetic}, for $i \geq N_0$, let $p_i$ be the projection $M_i \rightarrow M_{N_0}$. Given an embedded collar $C$ around $\Sigma_{N_0}$ in $M_{N_0}$, note that $p_i^{-1}(C)$ is an embedded collar of the same width around $p_i^{-1}(\Sigma_{N_0})$ in $M_i$. As in the proof of Theorem~\ref{thm:main}, it then follows again from Lemma~\ref{lem:collar} that in fact $p_i^{-1}(\Sigma_{N_0})$ does not separate $M_i$ for $i \geq N_0$. Note that the latter is stronger than saying that $\Sigma_i$ does not separate~$M_i$ for $i \geq N_0$, since $\Sigma_i \subset p_i^{-1}(\Sigma_{N_0})$. On the other hand, it is known (see the final page in the proof of \cite[Thm.~4.1]{MR4768580}) that the number of components of $p_i^{-1}(\Sigma_{N_0})$ approaches infinity as $i \rightarrow \infty$. Hence, given $r \in \mathbb{N}$, for $i$ sufficiently large, one can express $\pi_1(M_i)$ as the fundamental group of a graph of groups whose underlying graph is a rose with $\geq r$ petals. One thus recovers the following result of Lubotzky~\cite{MR1390750}.

\begin{corollary}\label{cor:free}
For any $r \in \mathbb{N}$, there is some $i \geq N_0$ such that $\Gamma(I_i)$ admits an epimorphism onto a free group of rank $r$.
\end{corollary}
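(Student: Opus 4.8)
The plan is to prove Corollary~\ref{cor:free} by extracting a free quotient of $\pi_1(M_i)$ from a graph-of-groups decomposition furnished by the nonseparating hypersurface $p_i^{-1}(\Sigma_{N_0})$. The key structural input, established in the discussion preceding the statement, is that $p_i^{-1}(\Sigma_{N_0})$ does not separate $M_i$ for $i \geq N_0$ and that its number of connected components tends to infinity with $i$. First I would fix $i$ large enough that $p_i^{-1}(\Sigma_{N_0})$ has at least $r$ components, say $S_1, \ldots, S_m$ with $m \geq r$. Since this hypersurface is $2$-sided, closed, embedded, and totally geodesic, cutting $M_i$ along it yields a compact manifold $M_i'$ (whose boundary is a double cover of the components), and the inclusion-induced maps assemble, via van Kampen's theorem, into a graph-of-groups decomposition of $\pi_1(M_i)$. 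The underlying graph $G$ has vertices the components of $M_i \setminus p_i^{-1}(\Sigma_{N_0})$ and edges the components $S_1, \ldots, S_m$.

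Next I would compute the rank of the fundamental group of the underlying graph $G$. Because $p_i^{-1}(\Sigma_{N_0})$ is nonseparating, $G$ is connected with first Betti number at least one; more precisely, the cyclomatic number of $G$ equals $m - (v - 1)$, where $v$ is the number of vertices (components of the complement). The crucial point is that collapsing a maximal tree of $G$ exhibits $\pi_1(M_i)$ as the fundamental group of a graph of groups whose underlying graph is a rose with $b_1(G) = m - v + 1$ petals, and the fundamental group of a graph of groups always surjects onto the free group $\pi_1(G) \cong F_{b_1(G)}$ obtained by collapsing all vertex groups to the trivial group. This collapsing map is the desired epimorphism, so it suffices to ensure $b_1(G) \geq r$.

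The main obstacle, and the step requiring the most care, is controlling the number of vertices $v$ relative to the number of edges $m$: a priori each new component of the hypersurface might be accompanied by a new complementary region, leaving $b_1(G)$ bounded. The cleanest way to finesse this is to observe that we need not use all of $p_i^{-1}(\Sigma_{N_0})$ at once. Since $p_i^{-1}(\Sigma_{N_0})$ is nonseparating, there exists at least one component, say $S_1$, whose removal does not disconnect $M_i$; cutting along $S_1$ alone yields a connected manifold and hence contributes a single petal to the rose. Iterating, I would greedily select components $S_{j_1}, S_{j_2}, \ldots$ one at a time, at each stage discarding any chosen component along which the already-cut manifold is disconnected and retaining only those that keep it connected, thereby producing after $k$ successful steps a graph of groups with underlying graph a rose on $k$ petals and hence a surjection onto $F_k$. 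Since the total number of components grows without bound while each separating component removed can only be "used up" once, a counting argument (or simply the fact that $b_1$ of the full dual graph is unbounded, as $m - v + 1 \to \infty$ follows from $m \to \infty$ together with a uniform bound on $v$ coming from the fixed volume ratio $\vol(M_i)/\vol(M_{N_0})$ controlling complementary regions) guarantees that $k$ can be taken as large as $r$. Choosing such an $i$ then produces the epimorphism $\pi_1(M_i) = \Gamma(I_i) \twoheadrightarrow F_r$, completing the proof.
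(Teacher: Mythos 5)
Your strategy is the paper's: cut $M_i$ along $p_i^{-1}(\Sigma_{N_0})$, form the dual graph $G$ of the resulting graph-of-groups decomposition, and surject $\pi_1(M_i)$ onto the free group $\pi_1(G)$ by killing all vertex and edge groups. However, the step you single out as the ``main obstacle'' --- controlling the number $v$ of complementary regions --- is where the write-up goes astray, and neither of the two patches you offer is sound. The parenthetical appeal to ``a uniform bound on $v$ coming from the fixed volume ratio $\vol(M_i)/\vol(M_{N_0})$'' is false: that ratio is the degree of the cover $p_i$, which tends to infinity with $i$ (it must, since $\mathrm{sys}(M_i)\to\infty$ forces $\vol(M_i)\to\infty$), so it yields no uniform bound on $v$. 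The greedy selection argument is also incomplete as stated: knowing only that the number of components $m$ tends to infinity and that \emph{some} component is nonseparating does not prevent the process from stalling after few steps (a dual graph that is a long path has $m$ edges, $m+1$ vertices, and trivial first Betti number), and the assertion that each separating component ``can only be used up once'' is not a proof that $k$ reaches $r$.

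The fix is one line, using a fact you already quoted as input: $p_i^{-1}(\Sigma_{N_0})$ does not separate $M_i$, which means precisely that $M_i\setminus p_i^{-1}(\Sigma_{N_0})$ is \emph{connected}. Hence $v=1$, every edge of $G$ is a loop, and $G$ is already a rose with $m$ petals, so $b_1(G)=m\ge r$ as soon as $i$ is large enough that $p_i^{-1}(\Sigma_{N_0})$ has at least $r$ components --- which is exactly what the cited growth of the number of components provides. This is how the paper concludes Corollary~\ref{cor:free}; no further control of $v$, and no greedy selection, is needed. With that correction your argument coincides with the paper's.
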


While there are by now several known arguments for the existence of a finite-index subgroup of $\Gamma$ admitting an epimorphism onto a nonabelian free group, Corollary~\ref{cor:free} guarantees the existence of a {\em congruence} such subgroup, as does Lubotzky's original argument. We highlight also that Remark \ref{rmk:arithmetic} and Corollary \ref{cor:free} concern the nontriviality in homology of {\em possibly disconnected} totally geodesic hypersurfaces of congruence hyperbolic manifolds. This distinguishes the discussion above from classical work of Millson \cite{MR422501} (see also Millson--Raghunathan \cite{MR592256}), where this question was considered for connected hypersurfaces.

We conclude with the following proposition, which shows that reflections with nonseparating fixed point sets occur quite generally. 

\begin{proposition}\label{prop:negativelycurved}
    Let $M$ be a closed negatively curved manifold admitting a reflection~$\tau$, and suppose that $ \pi_1(M)$ is residually finite. Then $M$ admits a finite cover to which $\tau$ lifts as a reflection whose fixed point set is nonseparating.
\end{proposition}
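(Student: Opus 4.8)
The plan is to reduce the statement to a combinatorial criterion for nonseparation and then to realise that criterion in a finite cover using residual finiteness. First I would record the trivial reduction: if $\Sigma:=\mathrm{Fix}(\tau)$ is already nonseparating we take the cover $M\to M$, so assume $\Sigma$ separates $M$. Since $\tau$ fixes $\Sigma$ pointwise and interchanges the two local sides of each component of $\Sigma$, it can preserve no component of $M\setminus\Sigma$; hence $M\setminus\Sigma$ has exactly two components, interchanged by $\tau$, and $M$ is the double $M=A\cup_\Sigma\tau(A)$ of the compact negatively curved manifold $A:=M/\tau$ along its boundary $\Sigma=\partial A$, with $\tau$ the swap. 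Writing $G=\pi_1 M$, $P=\pi_1 A$ and $C=\pi_1\Sigma$, negative curvature gives $C\hookrightarrow P\hookrightarrow G$ and $G=P\ast_C P$, with $\tau_*$ interchanging the two free factors and fixing $C$.

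Next I would establish the key criterion. For a $\tau$-equivariant finite cover $p\colon\hat M\to M$ with lifted reflection $\hat\tau$, the components of $p^{-1}(\Sigma)$ (the ``walls'') split into those fixed by $\hat\tau$, which constitute $\mathrm{Fix}(\hat\tau)$, and those interchanged in pairs (``unwrapped''). Now $\hat M\setminus\mathrm{Fix}(\hat\tau)$ is obtained from $p^{-1}(A)\sqcup p^{-1}(\tau A)$ by regluing along the unwrapped walls, each of which joins a component of $p^{-1}(A)$ to one of $p^{-1}(\tau A)=\hat\tau\bigl(p^{-1}(A)\bigr)$. Thus if $p^{-1}(A)$ is connected and there is at least one fixed and at least one unwrapped wall, then $\hat M\setminus\mathrm{Fix}(\hat\tau)$ is connected, so $\mathrm{Fix}(\hat\tau)$ is a nonempty nonseparating hypersurface and $\hat\tau$ is a reflection as required. (Equivalently, this says the underlying space of the orbifold $\hat M/\hat\tau$ is non-orientable.)

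The construction then amounts to producing a finite quotient $q\colon G\twoheadrightarrow Q$ and an involution $\iota$ of $Q$ with $q\circ\tau_*=\iota\circ q$ (so that $G':=\ker q$ is $\tau_*$-invariant and $\hat\tau$ descends to $\hat M=G'\backslash\widetilde M$), subject to (i) $q(P)=Q$, which makes $p^{-1}(A)$ connected, and (ii) $\iota\neq\mathrm{id}$. A direct computation shows that the wall $g\Sigma_0$ (with $\Sigma_0$ a fixed lift of a component of $\Sigma$) is $\hat\tau$-fixed exactly when $h\tau_*(h)^{-1}\in G'$, i.e. when $q(h)\in Q^{\iota}$; so $h=1$ always gives a fixed wall, while $\iota\neq\mathrm{id}$ forces $Q^{\iota}\subsetneq Q$ and, since $q$ is onto, the existence of an unwrapped wall. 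I would build such a $q$ by hand with $Q=Q_0\times Q_0$ and $\iota$ the coordinate swap: then $\iota\neq\mathrm{id}$ and $Q^{\iota}$ is the diagonal, and such a $q$ with $q(P)=Q$ is exactly a pair of surjections $\beta,\beta'\colon P\twoheadrightarrow Q_0$ that agree on $C$ and are jointly surjective onto $Q_0\times Q_0$. The unwrapped wall is then witnessed concretely by the doubled shortest orthogeodesic $g_1=u\,\tau_*(u)^{-1}$ (with $u\in P$, $u\notin C$), which is unwrapped precisely when $\beta(u)\neq\beta'(u)$.

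The hard part, and the only real obstacle, is the existence of such a jointly surjective pair $(\beta,\beta')$ agreeing on $C$; this is where residual finiteness of $P=\pi_1 A$ is used. Equivalently one needs a finite quotient of $P$ onto $Q_0\times Q_0$ carrying the incompressible boundary subgroup $C$ into the diagonal, i.e. two ``independent'' finite quotients of $P$ with the same restriction to $C$. By half-lives-half-dies the image of $C$ in $H_1(A;\mathbb F_p)$ is a proper subspace, so when $\dim_{\mathbb F_p}H_1(A;\mathbb F_p)\geq 2$ one can take $Q_0=\mathbb Z/p$ and produce $(\beta,\beta')$ abelian directly; in general one must upgrade to a non-abelian quotient of $P$ that separates $g_1$ off the diagonal, which residual finiteness supplies. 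This recovers the model picture in which $M$ is the double of a one-holed torus across a separating curve: the degree-three cover attached to an anti-invariant map $G\to\mathbb Z/3$ already makes $\hat\tau$ fix a single, nonseparating circle.
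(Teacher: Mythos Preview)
Your reduction to the doubled picture $M=A\cup_\Sigma\tau(A)$ and your combinatorial criterion are correct and well set up: a $\tau_*$-equivariant finite quotient $q\colon G\to Q$ with $q(P)=Q$ and induced involution $\iota\neq\mathrm{id}$ does yield a cover in which $\mathrm{Fix}(\hat\tau)$ is nonempty and nonseparating. The gap is precisely where you flag it, in producing the pair $(\beta,\beta')$. The half-lives-half-dies statement you invoke is a $3$-manifold fact; in arbitrary dimension there is no reason for the image of $H_1(\partial A;\mathbb F_p)\to H_1(A;\mathbb F_p)$ to be proper, so the abelian construction can fail outright. Your fallback ``residual finiteness supplies'' is not a proof: residual finiteness of $P$ lets you separate a given element from $1$, but it does not by itself manufacture two surjections $P\to Q_0$ that agree on $C$ yet are jointly surjective onto $Q_0\times Q_0$. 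You have not explained how to pass from ``$g_1\neq 1$ in some finite quotient of $G$'' to a quotient with the product-with-swap structure \emph{and} $q(P)=Q$; indeed, an arbitrary $\tau_*$-invariant finite-index normal subgroup of $G$ need not satisfy $PN=G$.

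The paper closes this gap differently, and it is worth seeing how. Rather than building a quotient of $G$ directly, one first observes that $\pi_1(\Sigma_0)$ (for a component $\Sigma_0$ of $\Sigma$) is the fixed subgroup of an automorphism of $\pi_1(M)$ induced by a lift of $\tau$; since fixed subgroups of automorphisms are separable in residually finite groups, $\pi_1(\Sigma_0)$ is separable in $\pi_1(A)$. Negative curvature (via limit sets) gives $[\pi_1(A):\pi_1(\Sigma_0)]=\infty$, so there is a finite cover $\hat A\to A$ in which $\Sigma_0$ has at least three lifts; doubling $\hat A$ yields a $\tau$-equivariant cover whose dual graph along the walls is $\Theta_r$ with $r\ge 3$, and a short explicit graph-cover argument (modelled on the $K_{3,3}$ cover of $\Theta_3$) then produces the desired nonseparating reflection. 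In effect, the paper replaces your unproved ``jointly surjective $(\beta,\beta')$'' by the combination \emph{separability of $C$} $+$ \emph{infinite index of $C$} $+$ \emph{graph combinatorics}; if you want to salvage your route, that separability statement is exactly the missing ingredient you need to justify the existence of the pair.
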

\begin{proof}
We may assume that $\mathrm{Fix}(\tau)$ separates $M$. Let $M_\mathrm{cut}$ be one of the two isometric components of the manifold with totally geodesic boundary obtained by cutting~$M$ along $\mathrm{Fix}(\tau)$, and let $\Sigma \subset M_\mathrm{cut}$ be a component of $\partial M_\mathrm{cut}$.

We first observe that, since~$\pi_1(\Sigma)$ is the fixed point set of an automorphism of~$\pi_1(M)$ (corresponding to a lift of~$\tau$ to $\widetilde{M}$), we have that ~$\pi_1(\Sigma)$ is separable in~$\pi_1(M)$, and hence in $\pi_1(M_\mathrm{cut})$.  Indeed, the fixed point set of a continuous self-map of a Hausdorff space is closed, and since an automorphism of a group $\Gamma$ is continuous with respect to the profinite topology on $\Gamma$, we have that the fixed point set of an automorphism of $\Gamma$ is closed for the profinite topology as soon as the latter topology is Hausdorff. On the other hand, to say that a group $\Gamma$ is residually finite (respectively, to say that a subgroup $\Delta < \Gamma$ is separable in $\Gamma$) is precisely to say that the profinite topology on $\Gamma$ is Hausdorff (resp., to say that $\Delta$ is closed in $\Gamma$ for the Hausdorff topology on the latter).  

We next observe that, since $M$ is closed and negatively curved, the index of~$\pi_1(\Sigma)$ in $\pi_1(M_\mathrm{cut})$ is infinite. Indeed, let $\widetilde{\Sigma}$ be a lift of $\Sigma$ to $\widetilde{M}$, so that $\widetilde{\Sigma}$ is a boundary component of some lift $\widetilde{M_\mathrm{cut}}$ of $M_\mathrm{cut}$ to $\widetilde{M}$. Then there is some boundary component~$\Sigma'$ of $M_\mathrm{cut}$ and some lift $\widetilde{\Sigma'}$ of $\Sigma'$ to $\widetilde{M}$ such that $\widetilde{\Sigma'}$ is distinct (and hence disjoint) from $\widetilde{\Sigma}$; this is immediate if $\partial M_\mathrm{cut}$ is not connected, and otherwise follows for instance from the fact that $\pi_1(\Sigma)$ is of strictly smaller cohomological dimension than, and hence of infinite index in, $\pi_1(M)$. By compactness of $\Sigma$, there is moreover such a lift $\widetilde{\Sigma'}$ whose distance from $\widetilde{\Sigma}$ is minimal. Up to replacing a closest lift $\widetilde{\Sigma'}$ with its reflection through $\widetilde{\Sigma}$, one then has $\widetilde{\Sigma'} \subset \widetilde{M_\mathrm{cut}}$. Now since~$\widetilde{M}$ is negatively curved, there is a unique point $o \in \widetilde{\Sigma'}$ that is closest to $\widetilde{\Sigma}$, and since the orbit of~$o$ under $\pi_1(\Sigma')$ is infinite, we conclude that $\widetilde{\Sigma}$  has infinitely many translates under~$\pi_1(\Sigma')$, so that there are indeed infinitely many lifts of $\Sigma$ to $\widetilde{M_\mathrm{cut}}$. 

As $\pi_1(\Sigma)$ is a separable infinite-index subgroup of $\pi_1(M_\mathrm{cut})$, we may find a finite cover $\hat{M}_\mathrm{cut}$ of $M_\mathrm{cut}$ to which~$\Sigma$ has at least $3$ lifts, so that $\partial \hat{M}_\mathrm{cut}$ has $r \geq 3$ components. Let $\hat{M}$ be the finite cover of~$M$ obtained by doubling $\hat{M}_\mathrm{cut}$ across $\partial \hat{M}_\mathrm{cut}$. Then $\tau$ lifts to a reflection~$\hat{\tau}$ of $\hat{M}$ whose fixed point set is $\partial \hat{M}_\mathrm{cut}$.

\begin{figure}
\centering

\tikzset{every picture/.style={line width=0.75pt}} 

\begin{tikzpicture}[x=0.75pt,y=0.75pt,yscale=-1,xscale=1]

\draw [color={rgb, 255:red, 208; green, 2; blue, 27 }  ,draw opacity=1 ]   (113,151) -- (265.33,151.8) ;
\draw [color={rgb, 255:red, 74; green, 144; blue, 226 }  ,draw opacity=1 ]   (113,151) .. controls (152.33,119.8) and (232.33,120.8) .. (265.33,151.8) ;
\draw [color={rgb, 255:red, 126; green, 211; blue, 33 }  ,draw opacity=1 ]   (113,151) .. controls (152.33,180.8) and (227.33,182.8) .. (265.33,151.8) ;
\draw    (113,151) .. controls (136.33,215.8) and (246.33,210.8) .. (265.33,151.8) ;
\draw [shift={(265.33,151.8)}, rotate = 287.85] [color={rgb, 255:red, 0; green, 0; blue, 0 }  ][fill={rgb, 255:red, 0; green, 0; blue, 0 }  ][line width=0.75]      (0, 0) circle [x radius= 3.35, y radius= 3.35]   ;
\draw [shift={(113,151)}, rotate = 70.2] [color={rgb, 255:red, 0; green, 0; blue, 0 }  ][fill={rgb, 255:red, 0; green, 0; blue, 0 }  ][line width=0.75]      (0, 0) circle [x radius= 3.35, y radius= 3.35]   ;
\draw  [fill={rgb, 255:red, 255; green, 255; blue, 255 }  ,fill opacity=1 ] (164,187) -- (211.33,187) -- (211.33,207.8) -- (164,207.8) -- cycle ;
\draw [color={rgb, 255:red, 74; green, 144; blue, 226 }  ,draw opacity=1 ]   (331,70) -- (480.33,70.8) ;
\draw    (331,70) .. controls (373.33,88.8) and (456.33,85.8) .. (480.33,70.8) ;
\draw [shift={(480.33,70.8)}, rotate = 327.99] [color={rgb, 255:red, 0; green, 0; blue, 0 }  ][fill={rgb, 255:red, 0; green, 0; blue, 0 }  ][line width=0.75]      (0, 0) circle [x radius= 3.35, y radius= 3.35]   ;
\draw [shift={(331,70)}, rotate = 23.95] [color={rgb, 255:red, 0; green, 0; blue, 0 }  ][fill={rgb, 255:red, 0; green, 0; blue, 0 }  ][line width=0.75]      (0, 0) circle [x radius= 3.35, y radius= 3.35]   ;
\draw  [fill={rgb, 255:red, 255; green, 255; blue, 255 }  ,fill opacity=1 ] (381,78) -- (426.33,78) -- (426.33,95.8) -- (381,95.8) -- cycle ;
\draw [color={rgb, 255:red, 126; green, 211; blue, 33 }  ,draw opacity=1 ]   (331,70) -- (479.33,162.8) ;
\draw [color={rgb, 255:red, 126; green, 211; blue, 33 }  ,draw opacity=1 ]   (480.33,70.8) -- (330,162) ;
\draw [color={rgb, 255:red, 208; green, 2; blue, 27 }  ,draw opacity=1 ]   (331,162) -- (480.33,162.8) ;
\draw    (331,162) .. controls (373.33,180.8) and (456.33,177.8) .. (480.33,162.8) ;
\draw [shift={(480.33,162.8)}, rotate = 327.99] [color={rgb, 255:red, 0; green, 0; blue, 0 }  ][fill={rgb, 255:red, 0; green, 0; blue, 0 }  ][line width=0.75]      (0, 0) circle [x radius= 3.35, y radius= 3.35]   ;
\draw [shift={(331,162)}, rotate = 23.95] [color={rgb, 255:red, 0; green, 0; blue, 0 }  ][fill={rgb, 255:red, 0; green, 0; blue, 0 }  ][line width=0.75]      (0, 0) circle [x radius= 3.35, y radius= 3.35]   ;
\draw [color={rgb, 255:red, 126; green, 211; blue, 33 }  ,draw opacity=1 ]   (332,249) -- (481.33,249.8) ;
\draw    (332,249) .. controls (374.33,267.8) and (457.33,264.8) .. (481.33,249.8) ;
\draw [shift={(481.33,249.8)}, rotate = 327.99] [color={rgb, 255:red, 0; green, 0; blue, 0 }  ][fill={rgb, 255:red, 0; green, 0; blue, 0 }  ][line width=0.75]      (0, 0) circle [x radius= 3.35, y radius= 3.35]   ;
\draw [shift={(332,249)}, rotate = 23.95] [color={rgb, 255:red, 0; green, 0; blue, 0 }  ][fill={rgb, 255:red, 0; green, 0; blue, 0 }  ][line width=0.75]      (0, 0) circle [x radius= 3.35, y radius= 3.35]   ;
\draw  [fill={rgb, 255:red, 255; green, 255; blue, 255 }  ,fill opacity=1 ] (382,257) -- (427.33,257) -- (427.33,274.8) -- (382,274.8) -- cycle ;
\draw [color={rgb, 255:red, 74; green, 144; blue, 226 }  ,draw opacity=1 ]   (331,162) -- (481.33,249.8) ;
\draw [color={rgb, 255:red, 74; green, 144; blue, 226 }  ,draw opacity=1 ]   (480.33,162.8) -- (332,249) ;
\draw [color={rgb, 255:red, 208; green, 2; blue, 27 }  ,draw opacity=1 ]   (331,70) -- (481.33,249.8) ;
\draw [color={rgb, 255:red, 208; green, 2; blue, 27 }  ,draw opacity=1 ]   (480.33,70.8) -- (332,249) ;
\draw  [fill={rgb, 255:red, 255; green, 255; blue, 255 }  ,fill opacity=1 ] (384,168) -- (429.33,168) -- (429.33,185.8) -- (384,185.8) -- cycle ;
\draw  [color={rgb, 255:red, 74; green, 144; blue, 226 }  ,draw opacity=1 ] (181,123) -- (201.33,128.4) -- (181,133.8) ;
\draw  [color={rgb, 255:red, 208; green, 2; blue, 27 }  ,draw opacity=1 ][fill={rgb, 255:red, 208; green, 2; blue, 27 }  ,fill opacity=1 ] (179,144) -- (205.33,151.4) -- (179,158.8) -- (192.17,151.4) -- cycle ;
\draw  [color={rgb, 255:red, 126; green, 211; blue, 33 }  ,draw opacity=1 ] (173,168) -- (192.33,173.9) -- (173,179.8) ;
\draw  [color={rgb, 255:red, 126; green, 211; blue, 33 }  ,draw opacity=1 ] (183,168) -- (202.33,173.9) -- (183,179.8) ;
\draw  [color={rgb, 255:red, 74; green, 144; blue, 226 }  ,draw opacity=1 ] (391,65) -- (411.33,70.4) -- (391,75.8) ;
\draw  [color={rgb, 255:red, 74; green, 144; blue, 226 }  ,draw opacity=1 ] (384.8,187.81) -- (400.1,202.25) -- (379.65,197.3) ;
\draw  [color={rgb, 255:red, 74; green, 144; blue, 226 }  ,draw opacity=1 ] (375.68,215.61) -- (396.45,212.26) -- (380.08,225.47) ;
\draw  [color={rgb, 255:red, 208; green, 2; blue, 27 }  ,draw opacity=1 ][fill={rgb, 255:red, 208; green, 2; blue, 27 }  ,fill opacity=1 ] (360,155) -- (386.33,162.4) -- (360,169.8) -- (373.17,162.4) -- cycle ;
\draw  [color={rgb, 255:red, 208; green, 2; blue, 27 }  ,draw opacity=1 ][fill={rgb, 255:red, 208; green, 2; blue, 27 }  ,fill opacity=1 ] (359.23,92.59) -- (370.73,117.4) -- (347.98,102.21) -- (362.17,107.4) -- cycle ;
\draw  [color={rgb, 255:red, 208; green, 2; blue, 27 }  ,draw opacity=1 ][fill={rgb, 255:red, 208; green, 2; blue, 27 }  ,fill opacity=1 ] (343.59,220.35) -- (367.56,207.17) -- (353.96,230.91) -- (358.17,216.4) -- cycle ;
\draw  [color={rgb, 255:red, 126; green, 211; blue, 33 }  ,draw opacity=1 ] (357.34,139.14) -- (376.81,133.69) -- (363.7,149.08) ;
\draw  [color={rgb, 255:red, 126; green, 211; blue, 33 }  ,draw opacity=1 ] (389,244) -- (408.33,249.9) -- (389,255.8) ;
\draw  [color={rgb, 255:red, 126; green, 211; blue, 33 }  ,draw opacity=1 ] (398,244) -- (417.33,249.9) -- (398,255.8) ;
\draw  [color={rgb, 255:red, 126; green, 211; blue, 33 }  ,draw opacity=1 ] (369.68,85.3) -- (381.29,101.85) -- (362.42,94.6) ;
\draw  [color={rgb, 255:red, 126; green, 211; blue, 33 }  ,draw opacity=1 ] (376.68,90.3) -- (388.29,106.85) -- (369.42,99.6) ;
\draw  [color={rgb, 255:red, 126; green, 211; blue, 33 }  ,draw opacity=1 ] (363.34,135.14) -- (382.81,129.69) -- (369.7,145.08) ;

\draw (172,188.4) node [anchor=north west][inner sep=0.75pt]    {$r-3$};
\draw (387,78.4) node [anchor=north west][inner sep=0.75pt]    {$r-3$};
\draw (389,168.4) node [anchor=north west][inner sep=0.75pt]    {$r-3$};
\draw (388,257.4) node [anchor=north west][inner sep=0.75pt]    {$r-3$};

\end{tikzpicture}

\caption{The graph $\Theta_r$ and its degree-$3$ cover obtained from~$K_{3,3}$.}
\label{fig:graph}
\end{figure}

Given an integer $s \geq 3$, let $\Theta_s$ be the graph consisting of two vertices and $s$ edges joining those vertices, and let $\iota_s$ be the involution of $\Theta_s$ that interchanges the vertices, and inverts each edge of, $\Theta_s$. Then $\Theta_r$ is the dual graph to the decomposition of~$\hat{M}$ along~$\partial \hat{M}_\mathrm{cut}$, and $\iota_r$ the involution of $\Theta_r$ induced by $\hat{\tau}$.
Now let $\iota_{3,3}$ be an automorphism of the complete bipartite graph $K_{3,3}$ that interchanges the two partite sets.
Then the fixed point set of $\iota_{3,3}$ in (the topological realization of) $K_{3,3}$ is nonseparating, and there is a degree-$3$ covering map $K_{3,3} \rightarrow \Theta_3$ with respect to which $\iota_3$ lifts as $\iota_{3,3}$. We can now obtain from~$K_{3,3}$ a degree-$3$ cover of $\Theta_r$ to which $\iota_r$ lifts as an involution whose fixed point set is nonseparating by adding $r-3$ edges joining each pair of vertices of~$K_{3,3}$ interchanged by $\iota_{3,3}$; see Figure \ref{fig:graph}. The corresponding cover of $\hat{M}$ is then as desired.
\end{proof}

As is apparent from the proof, Proposition \ref{prop:negativelycurved} holds under milder assumptions than negative curvature, but we chose to phrase the statement in that form given the longstanding open question of Gromov \cite{MR919829} as to whether indeed every Gromov-hyperbolic group is residually finite. In light of the ideas that went into the proof of the virtual Haken conjecture (see \cite{MR3363584} and the references therein), it seems more plausible that the fundamental group of any closed negatively curved manifold is residually finite under the assumption that the manifold contains a closed embedded totally geodesic hypersurface.

\subsection*{Acknowledgements} We thank Amina Abdurrahman, Misha Belolipetsky, Ian Biringer, Mingkun Liu, Bram Petri, and Richard Schwartz for helpful conversations and for their comments on an earlier draft of this note. We also thank the organizers of the event {\em Hyperbolic Manifolds, Their Submanifolds and Fundamental Groups} at IMPA, where these ideas were discussed. Finally, we are grateful to the referee for their comments on our manuscript. SD was supported by the Huawei Young Talents Program. FVP was funded by European Union (ERC, RaConTeich, 101116694).\footnote{Views and opinions expressed are however those of the author(s) only and do not necessarily reflect those of the European Union or the European Research Council Executive Agency. Neither the European Union nor the granting authority can be held responsible for them.}

\bibliography{reflectionsbib}{} 
\bibliographystyle{siam}

\end{document}